\theoremstyle{plain}
\newtheorem{thm}{Theorem}[section]
\newtheorem{counter}[thm]{Counterexample}
\newtheorem*{mainassump}{Main Assumption}
\newtheorem{remark}{Remark}  
\newtheorem{corollary}[thm]{Corollary}
\newtheorem{lemma}[thm]{Lemma}
\renewcommand{\div}{\operatorname{div}}
\newcommand{\trace}{\operatorname{tr}}
\newcommand{\D}{\displaystyle}
\newcommand{\ip}[2]{\ensuremath{\langle #1 , #2 \rangle}}
\newcommand{\tp}{\texttt{p}}
\renewcommand{\span}{\operatorname{span}}
\theoremstyle{definition}
\newtheorem{definition}{Definition}
\theoremstyle{remark}
\numberwithin{equation}{section}
\begin{document}
 
\title{Polarizing Anisotropic Heisenberg Groups}
\author{Thomas Bieske}
\address{ Department of Mathematics and Statistics,
University of South Florida, Tampa, FL 33620, USA}
\email{tbieske@mail.usf.edu}
\subjclass[2010]{Primary: 53C17, 35A08, 31C45, 35H20, 22E25, 43A80, 22E60}
\keywords{Sub-Riemannian geometry, polarizable Carnot group, group of Heisenberg-type, $\tp$-Laplacian} 

\begin{abstract}
We expand the class of polarizable Carnot groups by implementing a technique to polarize anisotropic Heisenberg groups.  
\end{abstract}
\maketitle
\section{Background and Motivation}
In \cite{BT}, Balogh and Tyson establish the concept of polarizable Carnot groups. Polarizable Carnot groups are marked by the ability to create a system of polar coordinates that properly integrates with the sub-Riemannian environment. This produces consequences such as sharp constants for the Moser-Trudinger inequality, capacity formulas, and closed-form fundamental solutions to the $\tp$-Laplace equation. The only non-Euclidean examples are those in the class of groups of Heisenberg-type. (See Section 3 for further discussion.)  We extend the class of polarizable groups to include anisotropic Heisenberg groups. After a brief discussion of general Carnot groups in Section 2, groups of Heisenberg-type in Section 3, and polarizable Carnot groups in Section 4, we present our technique in Section 5.  

\section{Carnot Groups}
We begin by denoting an arbitrary Carnot group in $\mathbb{R}^N$ by $G$ and its
corresponding Lie Algebra by $g$.   Recall that $g$ is nilpotent and
stratified, resulting in the decomposition $$g= V_1 \oplus V_2 \oplus
\cdots \oplus V_k$$ for appropriate vector spaces that satisfy the Lie
bracket relation $[V_1,V_j]=V_{1+j}.$ We set $\dim V_i=n_i$ and denote
a basis for $g$ by $$X_{11},X_{12},\ldots ,
X_{1n_1},X_{21},X_{22},\ldots , X_{kn_k}$$ so that
\begin{equation*} 
V_i  =  \span\{X_{i1},X_{i2},\ldots ,X_{in_i}\}.
\end{equation*} 
The Lie Algebra $g$ is associated with the group $G$ via the exponential map $\exp: g \to G.$
For $X\in g$, we let $\Theta_X : \mathbb{R} \to G$ be the unique integral curve of $X$ with the following properties:  
\begin{eqnarray*}
\Theta ' (t)|_{t=0} & = & X \\
\Theta(0) & = & 0.
\end{eqnarray*}
We define the diffeomorphism $\exp:g \to G$ by $ \exp X= \Theta_X(1)$. Coordinates in $G$ arise from the image of the exponential map. That is, 
\[\exp \bigg(\sum_{i=1}^k\sum_{j=1}^{n_i}x_{ij}X_{ij}\bigg)=(a_{11},a_{12},\ldots ,a_{1n_1},a_{21},a_{22},\ldots , a_{kn_k})\in G.\] One can chose the basis so that $\exp$ is the identity, that is, so that in the relation above, $a_{ij}=x_{ij}$. 

The product of exponentials obeys the Baker-Campbell-Hausdorff formula (see, for example, \cite{BO:LAG}) 
\begin{equation}\label{BCH}
(\exp X)(\exp Y) = \exp ( X + Y + \frac{1}{2}[X,Y]+R(X,Y)) 
\end{equation}
where $R(X,Y)$ are terms formed by iterated brackets of $X$ and $Y$ of order at least 3.  
In particular, if for $X, Y\in g$, we have  $(a_{11},a_{12},\ldots ,a_{1n_1},a_{21},a_{22},\ldots , a_{kn_k})=\exp X$ and $
(b_{11},b_{12},\ldots ,b_{1n_1},b_{21},b_{22},\ldots , b_{kn_k})=\exp Y$, then Equation \eqref{BCH} induces a  (non-abelian) algebraic group law on $G$. The identity element of $G$ is denoted by $0$ and called the origin. 

Endowing $g$ with an inner product $\ip{\cdot}{\cdot}$
 and related norm $\|\cdot\|$, induces a natural metric on $G$, called the
Carnot-Carath\'{e}odory distance, defined for the points $p$ and $q$
as follows: 
 \begin{equation}\label{distance}
d_C(p,q)= \inf_{\Gamma} \int_{0}^{1} \| \gamma '(t) \| dt 
\end{equation}
where the set $ \Gamma $
is the set of all  curves $ \gamma $ such that $ \gamma (0) = p,
 \gamma (1) = q $ and $\gamma '(t) \in V_1$.  
 By Chow's theorem (see, for example,
\cite{BR:SRG}) any two points
 can be connected by such a curve, which means $ d_C(p,q) $ is an honest
metric.  We may define a Carnot-Carath\'{e}odory ball of radius $r$ centered at a
point $p_0$ by $$\mathfrak{B}(p_0,r)=\{p\in G : d_C(p,p_0) < r\}.$$

 \subsection{Calculus}\label{oncalc} 
When the basis is orthonormal, a smooth function $u: G \to \mathbb{R}$ has the horizontal derivative given by
$$\nabla_0 u=(X_{11}u,X_{12}u, \ldots, X_{1n_1}u )$$ and the symmetrized horizontal second derivative matrix, 
denoted by $(D^2u)^\star$, with entries
\begin{eqnarray*} 
((D^2u)^\star)_{ij} =  \frac{1}{2} (X_{1i}X_{1j}u+X_{1j}X_{1i}u) 
\end{eqnarray*} 
for $i,j=1,2,\ldots , n_1.$  
\begin{remark}
For notational purposes, we shall set $X_i=X_{1i}$. 
\end{remark}

We recall that for any open set $\mathcal{O} \subset G$, the
function $f$ is in the horizontal Sobolev space $W^{1,P}(\mathcal{O})$ if
$f$ and $ X_if$ are in $ L^\tp(\mathcal{O}) $ for $i=1,2,\ldots , n_1$. Replacing $ L^\tp(\mathcal{O})
$ by $
L_{loc}^\tp(\mathcal{O})$, the space $ W_{loc}^{1,\tp}(\mathcal{O}) $ is defined similarly.  The space
$W_{0}^{1,\tp}(\mathcal{O})$ is the closure in $W^{1,\tp}(\mathcal{O})$ of smooth functions with
compact support.  For more complete details on calculus on Carnot groups, see \cite {FS:HSHG} ,\cite{H:CCG}, \cite{HH:QR}, and \cite{St:HA}.

Using the above derivatives, we define the horizontal $\tp$-Laplacian of a smooth function $f$ for
$1<\tp<\infty$ by 
\begin{eqnarray*}
\Delta_{\tp}f & = & \div(\|\nabla_0 f\|^{\tp-2}\nabla_0f)= \sum_{i=1}^{n_1}X_i(\|\nabla_0 f\|^{\tp-2}\nabla_0f)\\
  & = & \|\nabla_0 f\|^{\tp-2} \trace((D^2f)^\star)+(\tp-2) \|\nabla_0 f\|^{\tp-4}
\ip{(D^2f)^\star\nabla_0 f}{\nabla_0 f}.
\end{eqnarray*}

Formally taking the limit as $\tp$ goes to infinity
results in the infinite Laplacian which is defined by
\begin{equation*}
 \Delta_{\infty}f  =  \sum_{i,j=1}^{n_1} X_ifX_jfX_iX_jf = \ip{(D^2f)^{\star}\nabla_0f}{\nabla_0f}. 
\end{equation*}

\section{Groups of Heisenberg type.}
Groups of Heisenberg type were first introduced in \cite{K:LGHT}. We begin by recalling that the center $Z$ of a Lie Algebra is defined by 
\[Z=\{z\in g: [v,z]=0\  \forall v\in g\}.\]
A Heisenberg-type group is then defined as follows: 
\begin{definition}\cite[Definition 18.1.1]{BLU}\label{Jdef}
A Heisenberg-type Lie Algebra  is a finite-dimensional real Lie algebra $g$ which can be endowed with an inner product $\ip{\cdot}{\cdot}$ such that $[Z^\perp, Z^\perp]=Z$
where $Z$ is the center of $g$ and moreover, for every fixed $z\in Z$, the map
$J_z:Z^\perp\to Z^\perp$ defined by $\ip{J_z(v)}{w} = \ip{z}{[v,w]}\  \forall w\in Z^\perp$ 
is an orthogonal map whenever $\ip{z}{z}=1$. 

A Heisenberg-type group (also called group of Heisenberg-type) is a Carnot group with a Heisenberg-type Lie Algebra. 
\end{definition}
\begin{remark}
By scaling, we have 
\begin{equation}
(J_z)^2=-\|z\|^2Id 
\end{equation}
\noindent for all $z$ in $Z$.  
\end{remark}
 
Given a group $G$ of Heisenberg-type with corresponding Lie Algebra $g$, we have 
$$g=Z^\perp \oplus Z$$ where $Z$ is the center of $g$. The exponential map then yields 
for all $x\in G$, $$x=\exp (v(x)+z(x))$$ for $v(x)\in Z^\perp$ and $z(x)\in Z$. We then define the following on $G$:
\begin{eqnarray*}
\mathcal{Q} & = & \dim Z^\perp + 2\cdot \dim Z \\
\rho(x) & = & (\|v(x)\|^4+16 \|z(x)\|^2)^{\frac{1}{4}} \\
B_r & = & \{x\in G: \rho(x)< r \} \\
|B_r|_\tp & = & \int_{B_r} \|\nabla_0\rho\|^\tp \\
\textmd{and\ \ } \omega_\tp & = & |B_1|_\tp
\end{eqnarray*}

In \cite {CDG:CE}, Capogna, Danielli, and Garofalo find a closed-form formulation for the fundamental solution to the $\tp$-Laplace equation in groups of Heisenberg-type via the following theorem. 
\begin{thm}\cite[Theorem 2.1]{CDG:CE}\label{pfundsoln}
Fix $1<p<\infty$. Define the constant $C_\tp$ by 
\begin{eqnarray*}
C_\tp =\left\{\begin{array}{cc}
 \frac{\tp-1}{\tp-\mathcal{Q}}(\mathcal{Q}\omega_\tp)^{-\frac{1}{\tp-1}} & \textmd{when\ \ \ } \tp \neq \mathcal{Q} \\
 \mbox{} & \mbox{} \\
(\mathcal{Q}\omega_\tp)^{-\frac{1}{\tp-1}} & \textmd{when\ \ \ } \tp \neq \mathcal{Q}
 \end{array}\right.
 \end{eqnarray*}
and the function $\Gamma_\tp(x)$ by 
\begin{eqnarray}\label{gammadef}
\Gamma_\tp(x) =\left\{\begin{array}{cc}
C_\tp \rho^{\frac{\tp-\mathcal{Q}}{\tp-1}} & \textmd{when\ \ \ } \tp \neq \mathcal{Q} \\
 \mbox{} & \mbox{} \\
C_\tp \log\rho & \textmd{when\ \ \ } \tp \neq \mathcal{Q}.
 \end{array}\right.
 \end{eqnarray}
Then $\Gamma_\tp(x)$ is a fundamental solution to the equation 
$$\Delta_\tp u=0$$
with singularity at the identity element $e\in G$.  A fundamental solution with singularity at any other point of $G$ is obtained by left-translation of $\Gamma_\tp$.
\end{thm}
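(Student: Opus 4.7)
My approach is to reduce the PDE $\Delta_\tp\Gamma_\tp=0$ to three identities about the gauge $\rho$ on an H-type group, and then to fix the constant $C_\tp$ through a distributional boundary calculation. First, since $\Gamma_\tp$ is a function of $\rho$ alone, I write $u=f(\rho)$ and apply the chain rule: $\nabla_0 u=f'(\rho)\nabla_0\rho$ and $(D^2u)^\star_{ij}=f''(\rho)X_i\rho\,X_j\rho+f'(\rho)(D^2\rho)^\star_{ij}$. Substituting these into the expansion of $\Delta_\tp$ given in Section~\ref{oncalc} and collecting terms yields, on the set where $\nabla_0\rho\neq 0$,
\begin{equation*}
\Delta_\tp u = (f')^{\tp-2}\|\nabla_0\rho\|^{\tp-4}\Bigl[(\tp-1)f''\|\nabla_0\rho\|^4 + f'\|\nabla_0\rho\|^2\trace((D^2\rho)^\star) + (\tp-2)f'\Delta_\infty\rho\Bigr].
\end{equation*}

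Second, I would establish three pointwise identities for the H-type gauge by direct computation starting from $\rho^4=\|v(x)\|^4+16\|z(x)\|^2$, using the Baker--Campbell--Hausdorff formula \eqref{BCH} to realize the $X_i$ in exponential coordinates and exploiting the H-type structural relation $J_z^2=-\|z\|^2\operatorname{Id}$:
\begin{equation*}
\|\nabla_0\rho\|^2=\frac{\|v(x)\|^2}{\rho^2},\qquad \trace((D^2\rho)^\star)=\frac{\mathcal{Q}-1}{\rho}\|\nabla_0\rho\|^2,\qquad \Delta_\infty\rho=0
\end{equation*}
on $G\setminus\{0\}$. The third identity, asserting that $\rho$ is horizontally $\infty$-harmonic, is the structural heart of the theorem.

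Third, I plug the ansatz $f(r)=r^{(\tp-\mathcal{Q})/(\tp-1)}$ (when $\tp\neq\mathcal{Q}$) or $f(r)=\log r$ (when $\tp=\mathcal{Q}$) into the formula from the first step. Applying the identities of the second step, a brief algebraic check shows that the bracketed quantity vanishes precisely because the exponent $\alpha=(\tp-\mathcal{Q})/(\tp-1)$ satisfies $(\tp-1)(\alpha-1)+(\mathcal{Q}-1)=0$; the logarithmic case runs identically. Thus $\Delta_\tp\Gamma_\tp=0$ pointwise on $G\setminus\{0\}$, independent of $C_\tp$. To fix $C_\tp$, I would test against $\varphi\in C_c^\infty(G)$, integrate $\ip{\|\nabla_0\Gamma_\tp\|^{\tp-2}\nabla_0\Gamma_\tp}{\nabla_0\varphi}$ over $G\setminus B_\epsilon$, apply the divergence theorem, and let $\epsilon\to 0$. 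Using the dilation-homogeneity of $\rho$ together with the identity $\int_{B_r}\|\nabla_0\rho\|^\tp=r^{\mathcal{Q}}\omega_\tp$, the boundary flux on $\partial B_\epsilon$ concentrates to an explicit constant multiple of $\varphi(0)$, and matching this against the Dirac mass forces the stated formula for $C_\tp$.

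The main obstacle will be Step 2, and in particular the infinity-harmonicity $\Delta_\infty\rho=0$. Verifying it requires a delicate index-level calculation in which the cross-terms between the $Z^\perp$- and $Z$-components of $\nabla_0\rho$ and $(D^2\rho)^\star$ cancel only by virtue of the orthogonality of the $J_z$ map together with the defining relation $J_z^2=-\|z\|^2\operatorname{Id}$. Without this cancellation, the ansatz $\rho^\alpha$ would fail to be $\tp$-harmonic when $\tp\neq 2$, and no closed-form fundamental solution would be available---this is precisely the rigidity that picks out the polarizable category.
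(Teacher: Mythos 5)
The paper does not prove this statement at all: it is imported verbatim from Capogna--Danielli--Garofalo \cite{CDG:CE} (and the $\Delta_\infty\rho=0$ identity you isolate as the ``structural heart'' is likewise quoted separately as Corollary \ref{fundcor}, cited to \cite{B:Phd}). So there is no in-paper argument to compare against; what you have written is, in effect, a reconstruction of the proof from the cited literature. As such it is essentially correct and follows the Balogh--Tyson radial-reduction route rather than CDG's original direct verification: your reduction formula $\Delta_\tp f(\rho)=(f')^{\tp-2}\|\nabla_0\rho\|^{\tp-4}\bigl[(\tp-1)f''\|\nabla_0\rho\|^4+f'\|\nabla_0\rho\|^2\trace((D^2\rho)^\star)+(\tp-2)f'\Delta_\infty\rho\bigr]$ is right, the three gauge identities are the correct ones (the second is equivalent to Folland--Kaplan harmonicity of $\rho^{2-\mathcal{Q}}$, so you could cite it rather than recompute), the exponent check $(\tp-1)(\alpha-1)+(\mathcal{Q}-1)=0$ is correct, and the normalization of $C_\tp$ via the flux through $\partial B_\epsilon$ and the homogeneity $|B_r|_\tp=r^{\mathcal{Q}}\omega_\tp$ is the standard argument.

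One genuine point you gloss over: your pointwise computation is valid only where $\nabla_0\rho\neq 0$, but $\nabla_0\rho$ vanishes on the characteristic set $\{v(x)=0\}$ (the center), which meets every neighborhood of every central point. Concluding that $\Gamma_\tp$ is a weak solution on all of $G\setminus\{0\}$ therefore requires an extra argument that the identity $\int\ip{\|\nabla_0\Gamma_\tp\|^{\tp-2}\nabla_0\Gamma_\tp}{\nabla_0\varphi}=0$ survives across this degenerate set --- routine for $\tp\ge 2$ where the flux density extends continuously by zero, but requiring an integrability estimate for $1<\tp<2$. This is handled explicitly in \cite{CDG:CE} and should not be omitted. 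Separately, note that the case split in the statement as printed is garbled (both branches read $\tp\neq\mathcal{Q}$; the second should be $\tp=\mathcal{Q}$), which your logarithmic case silently corrects.
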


This result motivated the following corollary:
\begin{corollary}\cite{B:Phd}\label{fundcor}
Let $G$ be a Heisenberg-type group. Then on $G\setminus\{e\}$, the function $\rho$ in the theorem above satisfies $$\Delta_{\infty}\rho=0.$$
\end{corollary}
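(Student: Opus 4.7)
The plan is to extract $\Delta_\infty \rho = 0$ from Theorem \ref{pfundsoln} by using the fact that, for each admissible $\tp$, the function $\rho^{(\tp-\mathcal{Q})/(\tp-1)}$ is a scalar multiple of $\Gamma_\tp$ and therefore $\tp$-harmonic on $G\setminus\{e\}$. Because this holds on an entire continuous one-parameter family of exponents, and $\rho$ itself is independent of $\tp$, the $\tp$-dependence of the resulting identities can be used to isolate the $\infty$-harmonic equation for $\rho$.

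First I would perform the chain-rule computation for $f = \rho^\alpha$. Using $\nabla_0 f = \alpha \rho^{\alpha-1}\nabla_0\rho$ together with
\[((D^2 f)^\star)_{ij} = \alpha(\alpha-1)\rho^{\alpha-2} X_i\rho\,X_j\rho + \alpha\rho^{\alpha-1}((D^2\rho)^\star)_{ij}\]
and substituting into the expanded expression for $\Delta_\tp$ from Section \ref{oncalc}, I would collect the contributions to $\|\nabla_0\rho\|^\tp$, $\|\nabla_0\rho\|^{\tp-2}\trace((D^2\rho)^\star)$, and $\|\nabla_0\rho\|^{\tp-4}\Delta_\infty\rho$. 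A routine exponent tally yields the factorization
\begin{align*}
\Delta_\tp\rho^\alpha = |\alpha|^{\tp-2}\alpha\,\rho^{\alpha(\tp-1)-\tp}\Big[(\tp-1)(\alpha-1)\|\nabla_0\rho\|^\tp &+ \rho\|\nabla_0\rho\|^{\tp-2}\trace((D^2\rho)^\star)\\
&+ (\tp-2)\rho\|\nabla_0\rho\|^{\tp-4}\Delta_\infty\rho\Big].
\end{align*}
Now fix $\tp\in(1,\infty)\setminus\{\mathcal{Q}\}$ and set $\alpha = (\tp-\mathcal{Q})/(\tp-1)$, so $\rho^\alpha$ is a nonzero constant multiple of $\Gamma_\tp$. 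By Theorem \ref{pfundsoln} and the scaling of $\Delta_\tp$ in its argument, $\Delta_\tp\rho^\alpha = 0$ on $G\setminus\{e\}$, and since $(\tp-1)(\alpha-1) = 1-\mathcal{Q}$ and the prefactor $|\alpha|^{\tp-2}\alpha\,\rho^{\alpha(\tp-1)-\tp}$ is nonzero there, the bracket must vanish:
\[-(\mathcal{Q}-1)\|\nabla_0\rho\|^\tp + \rho\|\nabla_0\rho\|^{\tp-2}\trace((D^2\rho)^\star) + (\tp-2)\rho\|\nabla_0\rho\|^{\tp-4}\Delta_\infty\rho = 0.\]

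On the open set where $\|\nabla_0\rho\|>0$ I would divide by $\|\nabla_0\rho\|^{\tp-4}$ to obtain
\[-(\mathcal{Q}-1)\|\nabla_0\rho\|^4 + \rho\|\nabla_0\rho\|^2\trace((D^2\rho)^\star) + (\tp-2)\,\rho\,\Delta_\infty\rho = 0.\]
The first two terms here are independent of $\tp$, yet the identity must hold for every $\tp\in(1,\infty)\setminus\{\mathcal{Q}\}$. Comparing any two admissible exponents immediately forces $\rho\,\Delta_\infty\rho = 0$, and since $\rho>0$ off the identity this gives $\Delta_\infty\rho = 0$ wherever $\|\nabla_0\rho\|\neq 0$. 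The function $\rho = (\rho^4)^{1/4}$ is smooth on $G\setminus\{e\}$ because $\rho^4$ is a positive smooth polynomial expression there, so $\Delta_\infty\rho$ is continuous on $G\setminus\{e\}$; the conclusion therefore extends to the exceptional locus $\{v(x)=0,\,z(x)\neq 0\}$, which has empty interior, by continuity. The only real obstacle I anticipate is the bookkeeping required to collect the bracketed expression correctly; once the one-parameter identity is assembled, separating the $\tp$-dependence delivers the corollary essentially for free.
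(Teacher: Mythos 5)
Your proposal is correct and follows essentially the argument that underlies the paper's treatment of this corollary: the paper offers no written proof, only the citation to \cite{B:Phd}, and the intended deduction is exactly the one you carry out — expand $\Delta_\tp\rho^{(\tp-\mathcal{Q})/(\tp-1)}$ by the chain rule, use Theorem \ref{pfundsoln} to make the bracketed expression vanish for every admissible $\tp$, and separate the $\tp$-dependence to isolate $\Delta_\infty\rho=0$, extending across the characteristic set $\{v(x)=0\}$ by continuity (where in fact $\nabla_0\rho=0$ makes $\Delta_\infty\rho$ vanish trivially). The only point worth making explicit is that $\Gamma_\tp$ is a weak solution, so one should note that a smooth weak $\tp$-harmonic function with nonvanishing horizontal gradient satisfies the expanded non-divergence form pointwise before reading off the bracket identity.
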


\section{Polarizable Carnot Groups}
In \cite{BT}, Balogh and Tyson produce a procedure for constructing polar coordinates in certain Carnot groups, called polarizable Carnot groups. Polarizable Carnot groups are defined via the following definition: 
\begin{definition}\cite[Definition 2.12]{BT}\label{maindef}
We say that a Carnot group $G$ is polarizable if the homogeneous norm $N = u_2^{\frac{1}{2-\mathcal{Q}}}$ associated to Folland's [see \cite{F:SE}] solution $u_2$ for the 2-Laplacian 
$\Delta_2$ satisfies $\Delta_\infty N=0$ in $G\setminus\{0\}$.
\end{definition}

It is shown in \cite[Section 5]{BT} that groups of Heisenberg-type are polarizable. To date, these are the only examples of polarizable Carnot groups outside of $\mathbb{R}^n$. In particular, \cite[Section 6]{BT} shows that ``polarizable" is a fragile concept, unstable under small perturbations of the underlying Lie Algebra. The specific counterexample given is 
an anisotropic Heisenberg group in $\mathbb{R}^5$ generated by the vectors
\begin{eqnarray*}
X = \frac{\D \partial}{\D \partial x} + y  \frac{\D \partial}{\D \partial t}, & 
Y = \frac{\D\partial}{\D \partial y} - x  \frac{\D \partial}{\D \partial t}, \\
Z = \frac{\D \partial}{\D \partial z} + 2w  \frac{\D \partial}{\D \partial t}, & 
\textmd{and\ \ } \ \ W = \frac{\D \partial}{\D \partial w} - 2z \frac{\D \partial}{\D \partial t} 
\end{eqnarray*}
This counterexample can be generalized as in the following:
\begin{counter}\label{fakecounter}
For $j=1,2, \ldots, n$, let $L_j\in\mathbb{R}\setminus\{0\}$. Consider the following vector fields in $\mathbb{R}^{2n+1}$:
\begin{eqnarray}
X_j = \left\{\begin{array}{cl}
\frac{\D\partial}{\D\partial x_j} - L_j x_{j+n} \frac{\D\partial}{\D\partial t} & \textmd{for\ } j=1, 2, \ldots n \\
\mbox{} & \mbox{} \\
\frac{\D\partial}{\D\partial x_j} + L_{j-n} x_{j-n} \frac{\D\partial}{\D\partial t} & \textmd{for\ }  j=n+1, n+2, \ldots 2n 
\end{array}\right.
\end{eqnarray}
Resulting Lie brackets are given for $j<k$ by 
\begin{eqnarray*}
[X_j,X_k]=\left\{\begin{array}{cc}
2L_j\frac{\D\partial}{\D\partial t} & \textmd{for\ } k=n+j \\
\mbox{} & \mbox{} \\
0 & \textmd{for\ } k\neq n+j
\end{array}\right.
\end{eqnarray*}
We then add the vector $T=\frac{\D\partial}{\D\partial t}$ to form a basis for $\mathbb{R}^{2n+1}$ and stratify by $\mathbb{R}^{2n+1}=V_1\oplus V_2$ where 
\[V_1=\span\{X_1, X_2, \ldots, X_{2n}\} \textmd{\ \ and\ \ } V_2=\span\{T\}\]. 

We use this Lie Algebra and the exponential map of Section 2 to produce a step-two Carnot group. Standard calculations yield that that exponential map is the identity, that is, 
\[\exp \bigg(\sum_{j=1}^{2n}x_{j}X_{j}+tT\bigg)=(x_{1},x_{2},\ldots ,x_{2n},t).\] Consequently the algebraic group law is given by 
\begin{eqnarray*}
\lefteqn{(x_{1},x_{2},\ldots ,x_{2n},t)*(y_{1},y_{2},\ldots ,y_{2n},s)} \\
& & \bigg(x_{1}+y_{1} ,x_{2}+y_{2},\ldots ,x_{2n}+y_{2n},t+s+2\sum_{j=1}^nL_j(x_jy_{j+n}-y_jx_{j+n})\bigg). 
\end{eqnarray*}
\end{counter}
\qed

We have the following theorem.
\begin{thm}\label{fundsoln}
Let $n=2$ in Counterexample \ref{fakecounter} and let $L_2=2L_1$. Set  
\[N = \frac{(B^2 + t^2)^{\frac{1}{8}}(A-B + \sqrt{B^2 + t^2})^{\frac{3}{8}}}{(B + \sqrt{B^2 + t^2})^{\frac{1}{8}}}\] where 
\[B=B(x_1,x_2,x_3,x_4)=|L_1| \bigg(\frac{1}{2} x_1^2 +  x_2^2 + \frac{1}{2}  x_3^2 + x_4^2\bigg)\]
and
\[A=A(x_1,x_2,x_3,x_4)=|L_1| (x_1^2 +  x_2^2 + x_3^2 + x_4^2).\]
Then, for an appropriate constant $C$, we have $CN^{-4}$ is the fundamental solution to $\Delta_2 u=0$ but $\Delta_\infty N\neq 0$. 
\end{thm}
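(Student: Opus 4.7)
The statement splits into two claims: (a) $CN^{-4}$ is the fundamental solution of $\Delta_2$ with singularity at the origin, and (b) $\Delta_\infty N$ is not identically zero on $G\setminus\{0\}$. For this group $\dim V_1=4$ and $\dim V_2=1$, so the homogeneous dimension is $Q=6$ and the exponent $2-Q=-4$ matches Folland's general form. My plan is therefore to verify $\Delta_2 N^{-4}\equiv 0$ on $G\setminus\{0\}$, combine this with homogeneity of $N$ under the Carnot dilation $\delta_r(x,t)=(rx,r^2t)$ and Folland's theory \cite{F:SE} to pin down the constant $C$ so that the distributional Laplacian is a Dirac mass at the origin, and separately exhibit a point at which $\Delta_\infty N\ne 0$.

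\textbf{Reduction via chain rule.} The key simplification is that $N=F(A,B,t)$ for an explicit three-variable function $F$. Direct computation yields $X_jA=2|L_1|x_j$, while $X_jB$ equals $|L_1|x_j$ for $j\in\{1,3\}$ and $2|L_1|x_j$ for $j\in\{2,4\}$; and $X_jt$ equals the $\partial/\partial t$-coefficient of $X_j$. The horizontal chain rule then gives $X_jN$ as a three-term sum, and summing squares produces a tractable expression for $\|\nabla_0 N\|^2$. For the second-order terms, $X_iX_jN$ is expanded by the same chain rule, retaining the bracket contributions $[X_i,X_j]=\pm 2L_k\,\partial/\partial t$ whenever they are nonzero. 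The trace of the symmetrized horizontal Hessian yields $\Delta_2 N$ and the quadratic form in $\nabla_0 N$ yields $\Delta_\infty N$. Using $\Delta_2(N^{-4})=-4N^{-5}\Delta_2N + 20N^{-6}\|\nabla_0 N\|^2$, the harmonicity condition collapses to the single identity $N\Delta_2 N=5\|\nabla_0 N\|^2$ on $G\setminus\{0\}$, which is the concrete algebraic target of the computation.

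\textbf{Non-vanishing of $\Delta_\infty N$.} For part (b), it is enough to evaluate $\Delta_\infty N$ at one carefully chosen point. A natural candidate is a point on the axis $x_2=x_3=x_4=t=0$, $x_1\ne 0$, where many cross terms vanish and the asymmetry between the coefficient $\tfrac12$ attached to $x_1^2,x_3^2$ in $B$ and the coefficient $1$ attached to $x_2^2,x_4^2$ is cleanly isolated from the ``isotropic'' radial terms. Substituting into the general expression for $\Delta_\infty N$ derived above should produce an explicit nonzero monomial in $|L_1|$ and $x_1$, proving the claim. If that particular axis yields $0$ by accident, the point with $x_2=1$ and all other coordinates zero serves as a backup, since swapping $x_1\leftrightarrow x_2$ breaks the same asymmetry in the opposite direction.

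\textbf{Main obstacle.} The principal difficulty is the bookkeeping in the second-order computation: because the $X_j$ do not commute, each $X_iX_jN$ contains a bracket correction of order one in $\partial/\partial t$, and because $N$ is a composition with a three-variable auxiliary function, each horizontal derivative splits into three contributions via $\partial_A F$, $\partial_B F$, $\partial_t F$. Organizing the algebra so that the harmonicity identity $N\Delta_2 N=5\|\nabla_0 N\|^2$ collapses cleanly, while $\Delta_\infty N$ does \emph{not} collapse, is the delicate step. I would carry out all the calculus symbolically in $A$, $B$, $t$ (computing $\partial_A F$, $\partial_B F$, $\partial_t F$ and their mixed second derivatives once and for all), then substitute the vector-field-specific values of $X_j A$, $X_j B$, $X_j t$, and specialize to concrete coordinates only at the very end of part (b).
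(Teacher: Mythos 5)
Your overall strategy is viable and is genuinely different from the paper's: the paper proves this theorem by a one-line appeal to \cite[Section 6]{BT}, where the case $L_1=1$ is handled via the Beals--Gaveau--Greiner formula \cite{BGG}, and asserts the general case is similar. Your plan instead verifies everything by hand, and its skeleton is sound: $N$ is homogeneous of degree $1$ under $\delta_r(x,t)=(rx,r^2t)$, so $N^{-4}$ has degree $2-\mathcal{Q}$ with $\mathcal{Q}=6$; the reduction $\Delta_2(N^{-4})=-4N^{-5}\Delta_2N+20N^{-6}\|\nabla_0N\|^2$ correctly collapses the harmonicity claim to $N\Delta_2N=5\|\nabla_0N\|^2$; and the first-order data $X_jA$, $X_jB$, $X_jt$ are computed correctly. (One simplification you could exploit: since $\Delta_2$ and $\Delta_\infty$ only involve the symmetrized Hessian $(D^2N)^\star$, the bracket corrections $[X_i,X_j]=\pm 2L_k\partial/\partial t$ cancel and never need to be tracked.)

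The gap is that the decisive content of the theorem is nowhere established. The identity $N\Delta_2N=5\|\nabla_0N\|^2$ is only named as a ``target,'' and the non-vanishing of $\Delta_\infty N$ --- which is the entire point of the statement, since it is what disqualifies the group from being polarizable --- is asserted with ``should produce an explicit nonzero monomial,'' together with an explicit admission that your chosen test point might give $0$ ``by accident'' and a backup point held in reserve. A proof must exhibit the nonzero value at a concrete point (or an asymptotic regime where it is visibly nonzero); as written, part (b) is a conjecture about the outcome of a computation you have not done. Two smaller gaps in part (a): passing from ``$\Delta_2(N^{-4})=0$ on $G\setminus\{0\}$ plus homogeneity'' to ``$CN^{-4}$ is \emph{the} fundamental solution'' requires (i) showing the coefficient of the resulting Dirac mass is nonzero (e.g., by a flux integral over $\{N=\epsilon\}$, or by noting that hypoellipticity of $\Delta_2$ would otherwise force $N^{-4}$ to be smooth at $0$), and (ii) a uniqueness statement identifying this with Folland's solution $u_2$ from \cite{F:SE}, since Definition \ref{maindef} refers to that specific solution. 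You gesture at both but do not carry either out.
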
 
\begin{proof}
The theorem was proved in \cite[Section 6]{BT} for the case $L_1=1$ by using the Beals-Gaveau-Greiner \cite{BGG} formula for the fundamental solution. In the general case, the computations are similar and omitted.  
\end{proof}
Definition \ref{maindef} produces the following corollary.   
\begin{corollary}\label{cone}
Counterexample \ref{fakecounter} need not be polarizable.  
\end{corollary}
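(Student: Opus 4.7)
The plan is to combine Theorem \ref{fundsoln} directly with Definition \ref{maindef}. Since Corollary \ref{cone} asserts only that Counterexample \ref{fakecounter} \emph{need not} be polarizable, it suffices to exhibit a single instance of the family that fails to be polarizable, and the natural candidate is the instance $n=2$, $L_2 = 2L_1$ already analyzed in Theorem \ref{fundsoln}.

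First I would record the relevant structural data for this instance. The horizontal layer $V_1$ has dimension $2n = 4$ and the vertical layer $V_2$ is one-dimensional, so the homogeneous dimension is $\mathcal{Q} = 4 + 2 = 6$, and in particular $2 - \mathcal{Q} = -4$. Next I would match the function $N$ produced in Theorem \ref{fundsoln} against the homogeneous norm in Definition \ref{maindef}. Theorem \ref{fundsoln} asserts that $C N^{-4} = C N^{2-\mathcal{Q}}$ is the fundamental solution to $\Delta_2 u = 0$, so Folland's normalized solution $u_2$ must be a positive scalar multiple of $N^{2-\mathcal{Q}}$; consequently the Balogh--Tyson norm $u_2^{1/(2-\mathcal{Q})}$ coincides, up to a positive multiplicative constant, with the explicit function $N$ appearing in Theorem \ref{fundsoln}.

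To finish, I would invoke the homogeneity identity $\Delta_\infty(\lambda f) = \lambda^3\, \Delta_\infty f$ for $\lambda > 0$, which is immediate from the formula $\Delta_\infty f = \ip{(D^2 f)^\star \nabla_0 f}{\nabla_0 f}$. This shows that multiplying $N$ by a positive constant cannot change whether $\Delta_\infty N$ vanishes, so the non-vanishing of $\Delta_\infty N$ asserted in Theorem \ref{fundsoln} transfers verbatim to Folland's normalized norm. Definition \ref{maindef} is therefore violated for this particular instance of Counterexample \ref{fakecounter}, proving that the corresponding group is not polarizable.

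The main (mild) obstacle is the bookkeeping of normalization constants: one must be confident that the constant $C$ in $CN^{-4}$ that equates to Folland's canonical solution is irrelevant to the infinite Laplacian test. This is handled by the cubic homogeneity noted above, so no genuine difficulty arises beyond quoting Theorem \ref{fundsoln}.
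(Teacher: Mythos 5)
Your proposal is correct and follows exactly the route the paper intends: the paper gives no explicit proof, simply stating that Definition \ref{maindef} (combined with Theorem \ref{fundsoln}) yields the corollary, which is precisely your deduction. Your added bookkeeping --- computing $\mathcal{Q}=6$ so that $2-\mathcal{Q}=-4$, and using the cubic homogeneity of $\Delta_\infty$ to dispose of the normalizing constant --- merely makes explicit what the paper leaves implicit.
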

Theorem \ref{pfundsoln} and the discussion after Definition \ref{maindef} produce the following corollary.
\begin{corollary}\label{ctwo}
Counterexample \ref{fakecounter} need not be a group of Heisenberg-type.  
\end{corollary}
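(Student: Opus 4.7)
The plan is a short contradiction argument that exploits the specific parameter choice $n=2$, $L_2 = 2L_1$ already appearing in Theorem \ref{fundsoln}. I fix the resulting step-two Carnot group $G$ and assume, toward contradiction, that $G$ carries a Heisenberg-type Lie algebra structure. My goal is to reconcile the two known descriptions of the fundamental solution of $\Delta_2$ on $G$ and show they are logically incompatible.

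Under the assumption, Theorem \ref{pfundsoln} applies with the standard Heisenberg-type gauge $\rho = (\|v\|^4 + 16\|z\|^2)^{1/4}$. A quick bookkeeping check shows that $\mathcal{Q} = \dim Z^\perp + 2\dim Z = 4 + 2 = 6$ agrees with the homogeneous dimension of $G$ as a step-two group, so the relevant branch of Theorem \ref{pfundsoln} forces the fundamental solution of $\Delta_2$ to be a constant multiple of $\rho^{(2-\mathcal{Q})/(2-1)} = \rho^{-4}$. On the other hand, Theorem \ref{fundsoln} identifies this fundamental solution as $C N^{-4}$ with the explicit $N$ written there. Uniqueness of the fundamental solution (up to a positive multiplicative constant) therefore forces $N = c\rho$ for some $c > 0$. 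Corollary \ref{fundcor} now yields $\Delta_\infty \rho = 0$ on $G\setminus\{0\}$, and since $\Delta_\infty$ scales as a pure constant factor under constant rescaling of its argument, we conclude $\Delta_\infty N = 0$. This contradicts the last assertion of Theorem \ref{fundsoln}, and so $G$ cannot be of Heisenberg-type, establishing the corollary.

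There is essentially no obstacle beyond the matching of exponents and the appeal to uniqueness; both are routine. As a streamlined alternative, I could skip Theorem \ref{pfundsoln} entirely and argue in a single line: Corollary \ref{cone} records that this specific instance of Counterexample \ref{fakecounter} fails to be polarizable, while the discussion following Definition \ref{maindef} (quoting \cite[Section 5]{BT}) asserts that every group of Heisenberg-type is polarizable; the two statements together immediately yield the conclusion. I would likely present the proof in this abbreviated form in the final write-up, retaining the longer Theorem \ref{pfundsoln} / Corollary \ref{fundcor} route only as motivating commentary, since the author's hint explicitly cites both ingredients.
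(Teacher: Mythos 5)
Your proposal is correct and follows essentially the same route as the paper, whose entire proof is the one-line remark that Theorem \ref{pfundsoln} together with the polarizability of Heisenberg-type groups (the discussion after Definition \ref{maindef}) forces $\Delta_\infty N=0$, contradicting Theorem \ref{fundsoln}; your streamlined alternative via Corollary \ref{cone} is the same argument with the intermediate step named. The only point worth making explicit in a final write-up is that the contradiction rules out the Heisenberg-type property \emph{for the inner product making $X_1,\dots,X_{2n},T$ orthonormal} (the one underlying the $\Delta_2$ of Theorem \ref{fundsoln}), which is exactly why the paper says ``need not be'' and is later able to recover the Heisenberg-type structure in Section 5 by changing the inner product.
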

\section{Counterexample Revisited}
In this section, we will take a closer look at Counterexample \ref{fakecounter} and show that while the theorem is true, we can produce a procedure to polarize these Carnot groups and, in effect, falsify the Corollary \ref{cone}. 

We begin by examining the underlying assumptions on the Beals-Gaveau-Greiner \cite{BGG} formula Subsection \ref{oncalc} . In particular, we assumed in this formula, and consequently Theorem \ref{fundsoln}, that the vector fields are orthonormal. We will alter this assumption by operating under the following main assumption:

\begin{mainassump}
The vector fields in Counterexample \ref{fakecounter} satisfy the following:
\begin{eqnarray*}
\|X_j\|^2=\ip{X_j}{X_j} & = &2 |L_j| \textmd{\ for\ } j=1, 2, \ldots, n\\
\|X_j\|^2=\ip{X_j}{X_j} & = & 2|L_{j-n}| \textmd{\ for\ } j=n+1, n+2, \ldots, 2n  \\
\ip{X_j}{X_k} & = & 0 \textmd{\ for\ } j\neq k  \\
\ip{X_j}{T} & = & 0 \\
\textmd{and\ }\ \ip{T}{T} & = & 1. 
\end{eqnarray*}
In particular, the basis is orthogonal but no longer orthonormal. 
\end{mainassump}
\begin{remark}
We note that these assumptions do not change the center $Z$ of the Lie Algebra, the exponential map or the algebraic group law. However, the metric space properties have been altered, as Equation \eqref{distance} relies on this norm. 
\end{remark}
Under the Main Assumptions, we consider the map $J_T:V_1\to V_1$ from Definition \ref{Jdef}. By construction of the Lie Algebra, we have  
\begin{eqnarray*}
\ip{T}{[X_j,X_k]}=\left\{\begin{array}{cl}
\ip{T}{2L_jT}=2Lj & \textmd{when\ \ } j=1,2,\ldots, n \ \textmd{and\ \ } k=j+n \\
\mbox{} & \mbox{} \\
-\ip{T}{2L_jT}=-2Lj & \textmd{when\ \ } j=n+1,n+2,\ldots, 2n \ \textmd{and\ \ } k=j-n \\
\mbox{} & \mbox{} \\
0 & \textmd{otherwise.}
\end{array}\right.
\end{eqnarray*}
We conclude that the matrix $\mathcal{J}$ of the map $J_T$ is given by 
\[\begin{bmatrix}
0_{n\times n} & -I_{n\times n} \\
\mbox{} & \mbox{} \\
I_{n\times n} & 0_{n\times n}
\end{bmatrix}\]
and thus by Definition \ref{Jdef}, the resulting Carnot group is a group of Heisenberg-type. 

\subsection{Calculus using the Orthogonal Vector Field}
Because we are not employing orthonormal vectors, we must modify the formula for  divergence, and hence the formula for the $\Delta_\tp$ Laplace operator. This is detailed in the following lemma, whose proof results from the formulas for gradient and divergence in curvilinear coordinates (\cite[Chapter 2]{book1} or \cite[Chapter 7]{book2}).
\begin{lemma}\label{pLapdef}
Consider a vector field $F=\sum_{i=1}^{2n}g_iX_i$, where the vector fields $X_i$ satisfy our Main Assumption.  The divergence formula produces
\[\div F= \sum_{i=1}^{n}\frac{1}{\sqrt{2|L_i|}}X_ig_i+\sum_{i=n+1}^{2n}\frac{1}{\sqrt{2|L_{i-n}|}}X_ig_i.
\]
Now, let $f(x_1,x_2,\ldots, x_{2n}, t)$ be a smooth function and define the operator $\mathcal{M}(f)$ by
\[\mathcal{M}(f)=  \bigg(\sum_{j=1}^{n}\frac{1}{2|L_j|}(X_jf)^2+\sum_{j=n+1}^{2n}\frac{1}{2|L_{j-n}|}(X_jf)^2\bigg)^{\frac{1}{2}}.\]
Then, for $1<\tp< \infty$, the $\tp$-Laplace operator is given by 
\begin{eqnarray*}
\lefteqn{\Delta_\tp f = \sum_{i=1}^{n}\mathcal{M}(f)^{\tp-2}\frac{1}{2|L_i|}X_iX_if+
\sum_{j=n+1}^{2n}\mathcal{M}(f)^{\tp-2}\frac{1}{2|L_{i-n}|}X_iX_if+ (\tp-2)\mathcal{M}(f)^{\tp-4}} && \\
 & & \mbox{}\times\Bigg(\sum_{i=1}^{n}\bigg(\sum_{j=1}^n\frac{1}{4|L_iL_{j}|}X_iX_jfX_ifX_jf+\sum_{j=n+1}^{2n}\frac{1}{4|L_iL_{j-n}|}X_iX_jfX_ifX_jf\bigg)\\
 & & \mbox{}+\sum_{i=n+1}^{2n}\bigg(\sum_{j=1}^n\frac{1}{4|L_jL_{i-n}|}X_iX_jfX_ifX_jf+\sum_{j=n+1}^{2n}\frac{1}{4|L_{i-n}L_{j-n}|}X_iX_jfX_ifX_jf\bigg)\Bigg)
\end{eqnarray*}
and the $\infty$-Laplace operator is given by 
\begin{eqnarray*}
\lefteqn{\Delta_\infty f   =  \sum_{i=1}^{n}\bigg(\sum_{j=1}^n\frac{1}{4|L_iL_{j}|}X_iX_jfX_ifX_jf+\sum_{j=n+1}^{2n}\frac{1}{4|L_iL_{j-n}|}X_iX_jfX_ifX_jf\bigg)}&&\\
& & \mbox{} + \sum_{i=n+1}^{2n}\bigg(\sum_{j=1}^n\frac{1}{4|L_jL_{i-n}|}X_iX_jfX_ifX_jf+\sum_{j=n+1}^{2n}\frac{1}{4|L_{i-n}L_{j-n}|}X_iX_jfX_ifX_jf\bigg).
\end{eqnarray*}
\end{lemma}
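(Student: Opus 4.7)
The plan is to combine three ingredients—the intrinsic form of $\nabla_0 f$ in an orthogonal (not orthonormal) frame, the curvilinear-coordinate divergence formula, and the product rule—and then unroll $\Delta_\tp f = \div(\|\nabla_0 f\|^{\tp-2} \nabla_0 f)$ directly.

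First I would derive the horizontal gradient. Since $\nabla_0 f$ is characterized by $\ip{\nabla_0 f}{V}=Vf$ for every horizontal $V$, expanding $\nabla_0 f = \sum_j c_j X_j$ and testing against $V = X_k$ (using orthogonality of the $X_j$ under the Main Assumption) forces $c_k = (X_k f)/\|X_k\|^2$. Consequently $\|\nabla_0 f\|^2 = \sum_j (X_j f)^2/\|X_j\|^2$; splitting at $j = n$ and substituting $\|X_j\|^2 = 2|L_j|$ or $2|L_{j-n}|$ shows this equals $\mathcal{M}(f)^2$. The divergence formula for $F = \sum g_j X_j$ then follows from the standard curvilinear-coordinate formulas of \cite{book1,book2}, simplified by the fact that every scale factor $h_j = \|X_j\|$ is constant, so the usual $\partial_{u_j}(h_1\cdots h_N/h_j\, \cdot)$ decoration collapses to a single factor of $1/\|X_j\|$ on each $X_j g_j$.

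Next I would substitute $F = \|\nabla_0 f\|^{\tp-2}\nabla_0 f = \sum_j \mathcal{M}(f)^{\tp-2}(X_j f/\|X_j\|^2) X_j$ into this divergence formula. By the product rule each term contributes $\mathcal{M}(f)^{\tp-2} X_j X_j f + (\tp-2)\mathcal{M}(f)^{\tp-3}\, X_j \mathcal{M}(f)\cdot X_j f$, with the inner derivative expanded via
\[
X_j \mathcal{M}(f) = \frac{1}{\mathcal{M}(f)} \sum_k \frac{(X_k f)(X_j X_k f)}{\|X_k\|^2}.
\]
Collecting summands yields a trace-like piece $\mathcal{M}(f)^{\tp-2}\sum_j X_j X_j f/\|X_j\|^2$ and a quadratic-form piece $(\tp-2)\mathcal{M}(f)^{\tp-4}\sum_{i,j} (X_i f)(X_j f)(X_i X_j f)/(\|X_i\|^2\|X_j\|^2)$. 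Splitting each index set into $\{1,\ldots,n\}$ and $\{n+1,\ldots,2n\}$ and substituting the appropriate $\|X_j\|^2$ reproduces the four-block expression in the statement. The $\infty$-Laplacian then appears either by the standard formal $\tp\to\infty$ argument or, equivalently, by reading off the quadratic summand with the $\mathcal{M}(f)^{\tp-4}$ prefactor stripped away.

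The main obstacle I anticipate is the bookkeeping in the divergence step: the curvilinear-coordinate formula distinguishes coordinate components of $F$ from physical (unit-vector) components, and one must be consistent about which convention is in force so that the factors of $\|X_j\|$ end up in the right places. It also implicitly requires that the relevant volume form be a constant multiple of Lebesgue measure, which is automatic here because each $\|X_j\|$ is constant on $G$. Once that normalization is pinned down and the gradient identity $\nabla_0 f = \sum_j (X_j f)/\|X_j\|^2\cdot X_j$ is in hand, the remainder is routine differentiation, application of the chain rule to $\mathcal{M}(f)^{\tp-2}$, and reindexing.
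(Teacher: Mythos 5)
Your overall route is the same one the paper takes---the paper's entire proof is a citation to the curvilinear-coordinate formulas for gradient and divergence, so your write-up supplies the details the paper omits. Your identification of the horizontal gradient as $\nabla_0 f=\sum_j \|X_j\|^{-2}(X_jf)\,X_j$, the identity $\|\nabla_0 f\|=\mathcal{M}(f)$, and the final collected expression for $\Delta_\tp f$ (trace piece with coefficients $\|X_j\|^{-2}=(2|L_j|)^{-1}$, quadratic piece with coefficients $\|X_i\|^{-2}\|X_j\|^{-2}=(4|L_iL_j|)^{-1}$) all match the lemma.

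However, the step you defer as ``bookkeeping to be pinned down'' is a genuine gap rather than a cosmetic one: if you take the divergence formula literally as you (and the lemma) state it---$\div F=\sum_i\|X_i\|^{-1}X_ig_i$ for $F=\sum_i g_iX_i$---and substitute $g_i=\mathcal{M}(f)^{\tp-2}\|X_i\|^{-2}X_if$, each summand carries $\|X_i\|^{-3}=(2|L_i|)^{-3/2}$, not the $\|X_i\|^{-2}$ that your collected answer requires. So the intermediate formula and the final formula in your proposal are mutually inconsistent by one factor of $\|X_i\|$ per term; the two conventions really do clash, because the $g_i$ in the curvilinear divergence formula must be read as the physical components of $F$ relative to the unit vectors $X_i/\|X_i\|$, not the coefficients relative to $X_i$. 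The clean way to close this is to work throughout in the orthonormalized frame $\widetilde X_i=X_i/\sqrt{2|L_i|}$ (respectively $X_i/\sqrt{2|L_{i-n}|}$), which satisfies the orthonormality hypothesis under which the paper's formulas in Section 2 are stated: then $\nabla_0 f=\sum_i(\widetilde X_if)\widetilde X_i$, $\|\nabla_0 f\|=\mathcal{M}(f)$, and $\Delta_\tp f=\sum_i\widetilde X_i\bigl(\mathcal{M}(f)^{\tp-2}\widetilde X_if\bigr)$; expanding by the product and chain rules and rewriting each $\widetilde X_i$ in terms of $X_i$ produces exactly the stated four-block expression, and stripping the $\mathcal{M}(f)^{\tp-4}$ prefactor from the quadratic piece gives $\Delta_\infty$. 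Your observation that the scale factors are constant (so the volume form is a constant multiple of Lebesgue measure and the $X_i$ are formally skew-adjoint) is the correct justification for why no additional first-order terms appear in the divergence.
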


We then invoke Theorem \ref{pfundsoln} and Corollary \ref{fundcor} to establish the formula for  the fundamental solution to the $\tp$-Laplace equation. In particular,  
\begin{thm}
Let
\[\rho(x_1,x_2,\ldots, x_n,t)= \Bigg(\bigg(\sum_{i=1}^{n}2|L_i|x_i^2+\sum_{i=n+1}^{2n}2|L_{i-n}|x_i^2\bigg)^2+16t^2\Bigg)^{\frac{1}{4}}.\]
Then the function $\Gamma_\tp(x)$ defined in Equation \eqref{gammadef} is the fundamental solution to the $\tp$-Laplace equation for $1<\tp<\infty$ and on $G\setminus\{e\}$, we have $\Delta_{\infty}\rho=0.$
\end{thm}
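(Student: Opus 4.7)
The plan is to identify the present theorem as a direct application of Theorem \ref{pfundsoln} and Corollary \ref{fundcor} to the group structure established at the end of Section 5. Under the Main Assumption, the preceding computation of the matrix $\mathcal{J}$ shows that $J_T$ is orthogonal (with $J_T^2 = -I$ since $\|T\|=1$), so Definition \ref{Jdef} certifies the group as a group of Heisenberg-type. Thus Theorem \ref{pfundsoln} and Corollary \ref{fundcor} apply verbatim, and all that remains is to show that the candidate $\rho$ in the statement is exactly the function $\rho(x)=(\|v(x)\|^4+16\|z(x)\|^2)^{1/4}$ appearing in Section 3.

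First I would unpack the decomposition $g = Z^\perp \oplus Z$: since $Z=\span\{T\}$ and $Z^\perp = V_1 = \span\{X_1,\dots,X_{2n}\}$, any point $x\in G$ with group coordinates $(x_1,\dots,x_{2n},t)$ satisfies $x=\exp(v(x)+z(x))$ with $v(x)=\sum_{j=1}^{2n} x_j X_j$ and $z(x)=tT$. Next I would compute $\|v(x)\|^2$ using the Main Assumption inner product. Because the $X_j$ are orthogonal with $\|X_j\|^2 = 2|L_j|$ for $j\le n$ and $\|X_j\|^2 = 2|L_{j-n}|$ for $n+1 \le j \le 2n$, the cross terms vanish and one reads off
\[
\|v(x)\|^2 \;=\; \sum_{i=1}^{n} 2|L_i|\,x_i^2 \;+\; \sum_{i=n+1}^{2n} 2|L_{i-n}|\,x_i^2,
\]
while $\|z(x)\|^2 = t^2$ since $\|T\|=1$. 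Substituting into the Heisenberg-type formula $\rho=(\|v\|^4+16\|z\|^2)^{1/4}$ reproduces the expression in the statement on the nose.

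With $\rho$ so identified, the $\tp$-Laplace assertion follows immediately from Theorem \ref{pfundsoln}: $\Gamma_\tp$ as defined in Equation \eqref{gammadef} (with $\mathcal{Q}=\dim Z^\perp + 2\dim Z = 2n+2$ and $\omega_\tp$ computed from this $\rho$) is a fundamental solution to $\Delta_\tp u = 0$ on $G\setminus\{e\}$. Likewise Corollary \ref{fundcor} yields $\Delta_\infty \rho = 0$ on $G\setminus\{e\}$.

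The main conceptual obstacle, already absorbed into the Main Assumption, is that the relevant $\Delta_\tp$ and $\Delta_\infty$ are the operators from Lemma \ref{pLapdef} adapted to the orthogonal (not orthonormal) frame; the anisotropic weighting built into these operators is exactly what makes the rescaled Heisenberg-type identities go through and what causes the apparent failure in Theorem \ref{fundsoln} (which used the orthonormal calculus) to disappear. Beyond this conceptual point the argument is essentially bookkeeping, so I would not grind through the two Laplacian computations explicitly but simply cite Theorem \ref{pfundsoln} and Corollary \ref{fundcor} after pinning down $\rho$.
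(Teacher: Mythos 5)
Your proposal matches the paper's argument exactly: the paper also proves this theorem simply by invoking Theorem \ref{pfundsoln} and Corollary \ref{fundcor} once the Main Assumption has certified the group as Heisenberg-type, with the displayed $\rho$ being the norm $(\|v\|^4+16\|z\|^2)^{1/4}$ computed in the rescaled inner product. Your explicit verification that $\|v(x)\|^2=\sum_{i=1}^{n}2|L_i|x_i^2+\sum_{i=n+1}^{2n}2|L_{i-n}|x_i^2$ is the one piece of bookkeeping the paper leaves implicit, and it is correct.
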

We now may invoke \cite[Section 3]{BT} to construct polar coordinates.

\end{document}